\newtheorem{theorem}{Theorem}[section]
\newtheorem{lemma}{Lemma}[section]
\newtheorem{corollary}{Corollary}[section]
\theoremstyle{definition}
\theoremstyle{definition}
\newtheorem{definition}{Definition}[section]
\newcommand{\mc}{\mathcal}
\newcommand{\mf}{\mathfrak}
\newcommand{\N}{\mathbb{N}}
\newcommand{\C}{\mathbb{C}}
\renewcommand{\l}{\lambda}
\renewcommand{\subsetneq}{\varsubsetneq}
\numberwithin{equation}{section}
\DeclareMathOperator{\ord}{ord}
\DeclareMathOperator{\Tor}{Tor}
\DeclareMathOperator{\Spec}{Spec}
\def\l{\lambda}
\renewcommand{\subsetneq}{\varsubsetneq}
\theoremstyle{definition}
\theoremstyle{remark}
\DeclareMathOperator{\spec}{Spec}
\DeclareMathOperator{\proj}{Proj}
\begin{document}

\title{Spectral Theory and Bezout Theorem}

\author[J. C. Sampedro]{Juan Carlos Sampedro} \thanks{The author has been supported by the Research Grant PID2021--123343NB-I00 of the Spanish Ministry of Science and Innovation and by the Institute of Interdisciplinar Mathematics of Complutense University.}
\address{Departamento de Matemática Aplicada a la Ingeniería Industrial \\
	E.T.S.D.I. \\
	Ronda de Valencia 3 \\
	Universidad Politécnica de Madrid \\
	Madrid, 28012, Spain.}
\email{juancarlos.sampedro@upm.es}

\keywords{}
\subjclass[2020]{}

\begin{abstract}
This note investigates the hidden relationship between the concept of algebraic multiplicity of an eigenvalue and the local intersection index of algebraic varieties. 
\end{abstract}

\maketitle

\section{Introduction}

Given an algebraically closed field $k$ and $n\in\N$, we denote by $\mc{L}(k^{n})$ the vector space consisting on linear maps $T:k^{n}\to k^{n}$, by $GL(k^{n})$ the subset of $\mc{L}(k^{n})$ consisting on linear isomorphisms and by $\mc{S}(k^{n})$ the space of non-invertible linear maps, that is, $\mc{S}(k^{n}):=\mc{L}(k^{n})\setminus GL(k^{n})$. Note that
$
\mc{L}(k^{n})=\spec k[x_{1},\dots, x_{n^{2}}]
$,
after the usual identification of linear maps with matrices. Moreover,
$$
\mc{S}(k^{n})=\spec \left(\frac{k[x_{1},\dots,x_{n^{2}}]}{\mf{D}}\right),
$$
where $\mf{D}:=\langle \det(x_{1},\dots,x_{n^{2}})\rangle \subset k[x_{1},\dots,x_{n^{2}}]$. This scheme is reduced and irreducible, hence integral, of dimension $n^2-1$. Moreover, it is Cohen--Macaulay. 
\par Classical spectral theory deals with straight lines of linear operators of the special form
\begin{equation*}
	L(\l):=\l I_{n}-T, \quad \l\in k,
\end{equation*}
where $T\in \mc{L}(k^{n})$ is a fixed linear operator and $I_{n}$ denotes the identity on $k^{n}$. The classical spectrum of the linear operator $T\in\mc{L}(k^{n})$ is defined by
$\sigma(T):=\{\lambda\in k  :  \det(L(\l))=0\}$.
Since the characteristic polynomial $\det(L(\l))\in k[\l]$ has degree $n$ in $\l$, the set $\sigma(T)$ has at most $n$ elements. If $\sigma(T)=\{\lambda_{1},\lambda_{2},...,\lambda_{m}\}$ with $m\leq n$, then
\begin{equation*}
\det(L(\l))=\prod_{i=1}^{m}(\l-\l_{i})^{\mf{m}[T,\l_{i}]}, \quad \l\in k.
\end{equation*}
The number $\mf{m}[T,\l_i]\in \mathbb{N}$ is known as the algebraic multiplicity of $T$ at the eigenvalue $\l_i$. In other words, for $\l_{0}\in\sigma(T)$, $\mf{m}[T,\l_{0}]:=\ord_{\l=\l_{0}}\det(L(\l))$.
There is an intrinsic way of computing the algebraic multiplicity in terms of the dimension of certain linear subspace of $k^{n}$. Indeed, if $\nu(\l)$ denotes the least positive integer for which
$$
N[(\l I_{n}-T)^{\mu}] = N[(\l I_{n}-T)^{\nu(\l)}], \quad \forall \mu\geq \nu(\l),
$$
by Jordan's theorem, the algebraic multiplicity of the eigenvalue $\lambda_{i}\in\sigma(T)$ can be computed as
$$
	\mf{m}[T,\l_{i}]=\dim \bigcup_{\mu=1}^{\infty}N[(\l_{i} I_{N}-T)^{\mu}]=\dim N[(\l_{i}I_{n}-T)^{\nu(\l_{i})}].
$$
The classical spectral theorem states that 
\begin{equation}
	\label{I1}
\sum_{\l\in\sigma(T)}\mf{m}[T,\l]=n \; \; \text{and} \; \; k^{n}=\bigoplus_{\l\in\sigma(T)} N[(\l I_{n}-T)^{\nu(\l)}].
\end{equation}

A similar spectral theory exists for matrix polynomials of the form
\begin{equation}
	\label{Eq.1}
	L(\l):=\sum_{i=0}^{m}T_{i}\lambda^{i}, \quad \l\in k,
\end{equation}
for some fixed linear operators $T_{i}\in \mathcal{L}(k^{n})$, $1\leq i\leq m$, where $T_{m}\neq 0$. We denote by $\mc{P}(k^{n})$ the space of polynomial operator curves of the form \eqref{Eq.1}. The spectrum of any $L\in\mc{P}(k^{n})$ is naturally given by
\begin{equation*}
	\Sigma(L)=\{\l\in k: \det(L(\l))=0\}.
\end{equation*}
Since $\det(L(\l))\in k[\l]$ is a polynomial of degree at most $mn$, the set $\Sigma(L)$ is empty, finite or $\Sigma(L)=k$.
It is easily seen that if $T_{m}\in GL(k^{n})$ or $T_{0}\in GL(k^{n})$, then $\Sigma(L)$ is finite.
Naturally, we define the \textit{algebraic multiplicity} of $L\in\mc{P}(k^n)$ at $\lambda_{0}\in\Sigma(\mf{L})$ by
\begin{equation}
	\label{DEFPOL}
	\mf{m}[L,\l_{0}]:=\ord_{\l=\l_{0}}\det(L(\l)).
\end{equation}
It should be observed that $\mf{m}[L,\l_{0}]\in \N\cup\{\infty\}$ and that it is finite if $\Sigma(L)$ is. This concept has been widely studied in several contexts as the monograph of López-Gómez and Mora-Corral \cite{LM} shows.

There is also a spectral theory for monic matrix polynomials, that is, for $L\in \mc{P}(k^{n})$ such that $T_{m}=I_{n}$. This theory, attributable to Gohberg, Lancaster and Rodman \cite{GLR}, has been further developed by several authors \cite{GLR2,GLR3,GLRb,LM,LS21,R} and references therein. For monic polynomials, we have a generalization of identity \eqref{I1}:
\begin{equation}
	\label{I2}
	\sum_{\l\in\Sigma(L)}\mf{m}[L,\l]=nm.
\end{equation}

The aim of this note is to give a geometric interpretation of the algebraic multiplicity for polynomial matrices $L\in\mc{P}(k^{n})$ via intersection theory and to show that identities \eqref{I1}--\eqref{I2} are direct consequences of the celebrated Bezout theorem. 

This interpretation is given through the concept of local intersection multiplicity given by Serre \cite[Ch. V, \S C.1]{Se}. For any given pair $X_1, X_2$ of subvarieties of a smooth variety $X$ with proper intersection and any irreducible component, $C$,  of $X_1\cap X_2$, the local intersection index of $X_1$ and $X_2$ along $C$ is given through
$$
i(X_1,X_2;C)=\sum_{i=0}^{\dim X}(-1)^{i}\ell_{\mathcal{O}_{c,X}} \Tor^{\mathcal{O}_{c,X}}_{i}(\mathcal{O}_{c,X}/\mathfrak{P}_{X_1},\mathcal{O}_{c,X}/\mathfrak{P}_{X_2})
$$
where $\mathcal{O}_{c,X}$ is the local ring of $c\in C$ in $X$, and $\mathfrak{P}_{X_1}$ and $\mathfrak{P}_{X_2}$ are the ideals  of $X_1$ and $X_2$, respectively,  in the ring $\mathcal{O}_{c,X}$. As in our context  $X$ is a smooth algebraic variety over an algebraically closed field and $X_1, X_2$
are two irreducible Cohen--Macaulay subvarieties of $X$ such that $X_1\cap X_2=\{x_{1},\dots,x_{m}\}$, with proper intersection, according to, e.g., Eisenbud and Harris  \cite[p. 48]{EiHa}, the local intersection index of $X_1$ and $X_2$ at $x_{j}$ reduces to
\begin{equation}
	\label{1.1.12}
	i(X_1,X_2;x_{j})=\ell_{\mathcal{O}_{x_{j},X}}(\mathcal{O}_{x_{j},X}/(\mathfrak{P}_{X_1}+\mathfrak{P}_{X_2})).
\end{equation}
Under this concept, Bezout theorem for curves and hypersurfaces can be stated as follows (see, for instance, Hartshorne \cite[Th.7.7, Ch.1]{Ha} or Fulton \cite[\S 8.4]{Fu98}). Let $X$ be an algebraic curve in $\mathbb{P}^{n}(k)$ and $H$ an algebraic hypersurface in $\mathbb{P}^{n}(k)$ not containing $X$. If $X\cap H=\{z_1,\dots,z_{n}\}$, then
\begin{equation}
	\label{BT}
\sum_{i=1}^{n}i(X, H; z_{i}) = \deg(X) \cdot \deg(H),
\end{equation}
where $\deg(Y)$ stands for the degree of the algebraic variety $Y\subset \mathbb{P}^{n}(k)$.

The main result of this note states that under reasonable assumptions on $L\in\mc{P}(k^{n})$, the algebraic multiplicity of $L$ at $\l\in\Sigma(L)$ can be reinterpreted as the intersection multiplicity of the hypersurface $\mc{S}(k^{n})$ with the curve $L(k)$. That is, for $\l \in \Sigma(L)$,
\begin{equation}
	\label{p}
	\mf{m}[L,\l] = i(\mc{S}(k^{n}), L(k); L(\l)).
\end{equation}
Moreover, by \eqref{BT}, if $L$ does not meet $\mc{S}(k^{n})$ at infinity, then
\begin{equation}
	\label{q}
	\sum_{\l\in\Sigma(L)}\mf{m}[L,\l] = \sum_{\l\in\Sigma(L)} i(\mc{S}(k^{n}), L(k); L(\l)) = \deg(L)\cdot \deg(\mc{S}(k^{n})) = mn,
\end{equation}
recovering \eqref{I2}.

The article is organized as follows. In Section \ref{S2} we introduce the notion of admissible curve and give a precise formulation of identities \eqref{p}--\eqref{q} in this context. We also show that such interpretation fails in general if admissibility is not assumed. In Section \ref{S3}, we provide a geometric, albeit less direct, interpretation for general polynomial curves $L\in\mc{P}(k^{n})$, and prove analogous versions of identities \eqref{p}--\eqref{q}. Finally, Section \ref{S4} presents a worked example illustrating these notions.

\section{Results for admissible curves}\label{S2}

\par We will prove that the concept of algebraic multiplicity for polynomial operator curves is related to the intersection index of algebraic varieties. However, we have to restrict, at a first stage, to the subclass of admissible curves.

\begin{definition}
	A polynomial operator curve $L(\l):=(L_{i}(\l))_{1\leq i \leq n^{2}}\in \mc{P}(k^{n})$ is said to be \textit{admissible} if 
	$$
		k[L_1(\l),\cdots,L_{n^{2}}(\l)]=k[\l].
	$$
	The set of admissible operator curves will be denoted by $\mc{A}(k^{n})$.
\end{definition}

For any given $L\in \mc{A}(k^{n})$, the associated morphism of schemes $L:\Spec k[\l] \to \Spec k[x_{1},\dots,x_{n^{2}}]$ induce an isomorphism onto its image. Therefore, $L(k)$ is a scheme isomorphic to the affine line $\Spec k[\l]$. Consequently, $\text{Im}(L)$ is smooth and $L$ is injective. Moreover, 
$$
L(k)\simeq \Spec \left(\frac{k[x_{1},\dots,x_{n^{2}}]}{\mf{I}}\right), \quad \mf{I}:=\ker \varphi,
$$
where $\varphi: k[x_1,\dots,x_{n^{2}}]\to k[\l]$ is the ring homomorphism associated to $L$. 

The next lemma will be useful in the sequel.
\begin{lemma}
	\label{L2.1}
	Let $L\in \mc{A}(k^{n})$ and $\l_{0}\in k$. Then, the following ideal identity holds
	$$
	\langle L_{1}(\l)-L_{1}(\l_{0}), \cdots, L_{n^{2}}(\l)-L_{n^{2}}(\l_{0})\rangle = \langle (\l-\l_0) \rangle \quad \text{in} \; \; k[\l].
	$$
\end{lemma}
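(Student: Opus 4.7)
The plan is to prove the equality of ideals by establishing both inclusions, with the admissibility hypothesis entering only in the nontrivial direction.

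The inclusion $\langle L_{1}(\l)-L_{1}(\l_{0}),\dots,L_{n^{2}}(\l)-L_{n^{2}}(\l_{0})\rangle \subseteq \langle \l-\l_{0}\rangle$ is immediate: each generator $L_{i}(\l)-L_{i}(\l_{0})\in k[\l]$ vanishes at $\l=\l_{0}$, so by the remainder theorem it is divisible by $\l-\l_{0}$ in $k[\l]$.

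For the reverse inclusion, I would invoke admissibility to pick a polynomial $P\in k[x_{1},\dots,x_{n^{2}}]$ with
$$
P(L_{1}(\l),\dots,L_{n^{2}}(\l))=\l \quad \text{in } k[\l].
$$
Evaluating at $\l=\l_{0}$ gives $P(L_{1}(\l_{0}),\dots,L_{n^{2}}(\l_{0}))=\l_{0}$, so
$$
\l-\l_{0}=P(L_{1}(\l),\dots,L_{n^{2}}(\l))-P(L_{1}(\l_{0}),\dots,L_{n^{2}}(\l_{0})).
$$
Expanding this difference by the standard telescoping trick (swapping one argument at a time, and using that $a^{k}-b^{k}=(a-b)(a^{k-1}+\cdots+b^{k-1})$ in any commutative ring to factor monomial differences), each summand becomes a $k[\l]$-multiple of some $L_{i}(\l)-L_{i}(\l_{0})$. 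Hence $\l-\l_{0}$ lies in the ideal generated by the $L_{i}(\l)-L_{i}(\l_{0})$, proving the other inclusion.

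There is really no major obstacle here; the only conceptual point is recognizing that admissibility is precisely the hypothesis needed to solve $\l$ as a polynomial expression in the components, which then makes the telescoping argument work. Without this hypothesis the inclusion $\langle \l-\l_{0}\rangle \subseteq \langle L_{i}(\l)-L_{i}(\l_{0})\rangle$ would fail in general (for instance, if all $L_{i}$ are polynomials in $\l^{2}$ and $\l_{0}=0$, the right-hand ideal sits inside $\langle \l^{2}\rangle$).
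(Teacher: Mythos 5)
Your proof is correct, and it takes a genuinely different route from the paper's. The paper works entirely inside the principal ideal domain $k[\l]$: it factors each generator as $(\l-\l_0)^{k_i}f_i(\l)$ with $f_i(\l_0)\neq 0$, identifies the ideal with the one generated by the gcd, uses the injectivity of $L$ (a consequence of admissibility established just before the lemma) to conclude that the $f_i$ have no common zero so that the gcd is the pure power $(\l-\l_0)^{k_-}$ with $k_-=\min_i k_i$, and finally rules out $k_-\geq 2$ by contradiction, arguing that if every $L_i-L_i(\l_0)$ vanished to order at least two at $\l_0$ then $k[L_1(\l),\dots,L_{n^2}(\l)]$ could not contain $\l$. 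You instead attack the reverse inclusion head-on: admissibility hands you $P$ with $P(L_1(\l),\dots,L_{n^2}(\l))=\l$, and the telescoping identity $P(a)-P(b)\in\langle a_1-b_1,\dots,a_N-b_N\rangle$, valid in any commutative ring, immediately puts $\l-\l_0$ in the ideal. Your argument is more direct and arguably cleaner: it avoids the gcd machinery and the proof by contradiction (whose final ``degree comparison'' step the paper leaves somewhat informal), it makes transparent exactly where and how admissibility is used, and it would generalize beyond the PID setting. What the paper's route buys in exchange is the extra structural information that $\min_i k_i=1$, i.e.\ that at least one component $L_i$ separates $\l_0$ to first order, though this is not needed elsewhere in the article. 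Your closing remark on the failure of the reverse inclusion without admissibility matches the paper's own counterexample $L(\l)=\mathrm{diag}(\l^2,1)$.
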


\begin{proof}
	For each $1\leq i \leq n^{2}$, there exist $k_{i}\geq 1$ and $f_{i}\in k[\l]$ with $f_{i}(\l_0)\neq 0$, such that
	$$
	L_{i}(\l)-L_{i}(\l_{0})=(\l-\l_{0})^{k_{i}}f_{i}(\l).
	$$
	By the injectivity of $L$, there does not exist $\l_{1}\neq \l_{0}$ such that $f_{i}(\l_{1})=0$ for every $1\leq i \leq n^{2}$. Therefore, 
	\begin{align*}
		&\langle L_{1}(\l)-L_{1}(\l_{0}), \cdots, L_{n^{2}}(\l)-L_{n^{2}}(\l_{0})\rangle  = \langle (\l-\l_0)^{k_{1}}f_{1}(\l), \cdots, (\l-\l_0)^{k_{n^{2}}}f_{n^{2}}(\l)\rangle \\
		&=\langle \text{gcd}\left[(\l-\l_0)^{k_{1}}f_{1}(\l), \cdots, (\l-\l_0)^{k_{n^{2}}}f_{n^{2}}(\l)\right]\rangle 
		=\langle (\l-\l_{0})^{k_{-}}\rangle,
	\end{align*}
	where $k_{-}:=\text{min}\{k_{1},\cdots, k_{n^{2}}\}$. Suppose by the way of contradiction that $k_{-}\geq 2$. Then, 
	$$
	L_{i}(\l)=(\l-\l_0)^{k_{i}}f_{i}(\l)+L_{i}(\l_{0}), \quad k_{i}\geq 2, \quad 1\leq i \leq n^{2}
	$$
	and therefore
	$$
	k[L_{1}(\l),\dots,L_{n^{2}}(\l)]=k[(\l-\l_0)^{k_{1}}f_{1}(\l), \dots, (\l-\l_0)^{k_{n^{2}}}f_{n^{2}}(\l)].
	$$
	A degree comparison argument shows that $k[L_{1}(\l),\dots,L_{n^{2}}(\l)]\neq k[\l-\l_0]=k[\l]$. Consequently $k_{-}=1$ and the proof is complete.
\end{proof}

The next result proves that for admissible curves $L\in\mc{A}(k^{n})$, the algebraic multiplicity is, in fact, the intersection multiplicity of the hypersurface $\mc{S}(k^{n})$ and the curve $L(k)$ at $L(\l_{0})$.

\begin{theorem}
	\label{Th2.1}
	Let $L\in \mathcal{A}(k^{n})$ such that $\Sigma(L)\neq k$. Then, for every $\l_{0}\in\Sigma(L)$, it holds that
	\begin{equation}
		\label{Eq}
	\mf{m}[L,\l_{0}]=i(\mathcal{S}(k^{n}), L(k); L(\l_{0})).
	\end{equation}
\end{theorem}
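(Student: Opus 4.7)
The plan is to reduce the intersection index on the right-hand side of \eqref{Eq} to a length computation inside the coordinate ring of $L(k)$, and then recognize that length as the order of vanishing of $\det(L(\l))$.

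First I would verify the hypotheses that make formula \eqref{1.1.12} applicable. The ambient scheme $X=\mathcal{L}(k^{n})=\Spec k[x_{1},\dots,x_{n^{2}}]$ is smooth; $\mc{S}(k^{n})$ is irreducible and Cohen--Macaulay by assumption; and $L(k)\simeq \mathbb{A}^{1}_{k}$ is smooth, hence Cohen--Macaulay. Since $\Sigma(L)\neq k$ is finite and $L$ is injective (by admissibility), the set-theoretic intersection $\mc{S}(k^{n})\cap L(k)=\{L(\l):\l\in\Sigma(L)\}$ is finite. As $\dim\mc{S}(k^{n})+\dim L(k)=(n^{2}-1)+1=n^{2}=\dim X$, the intersection is proper and \eqref{1.1.12} gives
$$
i(\mc{S}(k^{n}), L(k); L(\l_{0}))=\ell_{\mathcal{O}_{L(\l_{0}),X}}\!\left(\mathcal{O}_{L(\l_{0}),X}/(\mf{P}_{\mc{S}(k^{n})}+\mf{P}_{L(k)})\right).
$$

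Next, I would pass to the quotient by $\mf{P}_{L(k)}$ to land inside the local ring of the curve. Let $\varphi:k[x_{1},\dots,x_{n^{2}}]\to k[\l]$ be the ring map with $\varphi(x_{i})=L_{i}(\l)$. Admissibility says $\varphi$ is surjective, and $\ker\varphi=\mf{I}=\mf{P}_{L(k)}$, so $\varphi$ induces an isomorphism $\bar\varphi:k[x_{1},\dots,x_{n^{2}}]/\mf{I}\xrightarrow{\;\sim\;} k[\l]$. The maximal ideal $\mf{m}_{L(\l_{0})}=\langle x_{i}-L_{i}(\l_{0})\rangle_{1\leq i\leq n^{2}}$ maps under $\bar\varphi$ to $\langle L_{i}(\l)-L_{i}(\l_{0})\rangle_{1\leq i\leq n^{2}}$, which by Lemma \ref{L2.1} equals $(\l-\l_{0})$. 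Localizing $\bar\varphi$ at this maximal ideal therefore yields
$$
\mathcal{O}_{L(\l_{0}),X}/\mf{P}_{L(k)}\;\simeq\; k[\l]_{(\l-\l_{0})}.
$$

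Finally, under this isomorphism the image of $\mf{P}_{\mc{S}(k^{n})}=\langle\det(x_{1},\dots,x_{n^{2}})\rangle$ is the ideal generated by $\det(L(\l))$. Writing $\det(L(\l))=(\l-\l_{0})^{\mf{m}[L,\l_{0}]}g(\l)$ with $g(\l_{0})\neq 0$, the factor $g$ is a unit in the local ring $k[\l]_{(\l-\l_{0})}$, so the image equals $((\l-\l_{0})^{\mf{m}[L,\l_{0}]})$. Hence
$$
\mathcal{O}_{L(\l_{0}),X}/(\mf{P}_{\mc{S}(k^{n})}+\mf{P}_{L(k)})\;\simeq\; k[\l]_{(\l-\l_{0})}/\bigl((\l-\l_{0})^{\mf{m}[L,\l_{0}]}\bigr),
$$
which is an artinian local $k$-algebra of length exactly $\mf{m}[L,\l_{0}]$, giving \eqref{Eq}.

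The main obstacle is the middle step: making sure that the quotient by $\mf{P}_{L(k)}$ really does produce $k[\l]_{(\l-\l_{0})}$, and not some larger semilocalization. This is precisely where admissibility enters through Lemma \ref{L2.1}: without the identity $\langle L_{i}(\l)-L_{i}(\l_{0})\rangle=(\l-\l_{0})$, the maximal ideal $\mf{m}_{L(\l_{0})}/\mf{I}$ could correspond, after $\bar\varphi$, to a higher power of $(\l-\l_{0})$ (e.g.\ for a ramified parametrization like $\l\mapsto(\l^{2},\l^{3})$), which would inflate the length computation and break \eqref{Eq}.
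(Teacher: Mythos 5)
Your proposal is correct and follows essentially the same route as the paper: both reduce the intersection index to a length via the Cohen--Macaulay formula \eqref{1.1.12}, transport the computation through the isomorphism $k[x_{1},\dots,x_{n^{2}}]/\mf{I}\simeq k[\l]$ furnished by admissibility, invoke Lemma \ref{L2.1} to identify the localization with $k[\l]_{(\l-\l_{0})}$, and observe that the determinant ideal becomes $\langle(\l-\l_{0})^{\mf{m}[L,\l_{0}]}\rangle$ up to a unit. Your explicit verification of properness of the intersection and your closing remark on where admissibility is essential are welcome additions that the paper leaves implicit.
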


\begin{proof}
	The ring homomorphism
	$$
	\varphi: k[x_{1},\cdots,x_{n^{2}}] \longrightarrow k[\l], \quad x_{i}\mapsto L_{i}(\l),
	$$
	where $L(\l)\equiv (L_{i}(\l))_{1\leq i \leq n^{2}}$, induce an isomorphism
	$$
	\phi: \frac{k[x_{1},\cdots,x_{n^{2}}]}{\mf{I}} \longrightarrow k[L_{1}(\l), \cdots, L_{n^{2}}(\l)]= k[\l].
	$$
	Let $\mf{p}=\langle L_{i}(\l)-L_{i}(\l_0) : i=1,\dots, n^{2}\rangle$ and  $\mf{q}=\langle (\l-\l_0)\rangle \subset k[\l]$, then by Lemma \ref{L2.1}, $\phi(\mf{p})=\mf{q}$.
	Consequently, the induced morphism $\bar{\phi}:\frac{\mc{O}_{\mf{p}}}{\mf{I}}\to \mc{O}_{\mf{q}}$
	is an isomorphism, where $\mathcal{O}_{\mf{p}}$ stands for  the local ring of $k[x_1,\dots,x_{n^{2}}]$ at $\mf{p}$ and $\mc{O}_{\mf{q}}$ is the local ring of $k[\l]$ at $\mf{q}$.
	
	On the other hand, denoting $\chi:=\mf{m}[L,\l_{0}]$, there exists $f\in k[\l]$, $f(\l_{0})\neq 0$, such that
	$$
	\det(L(\l))=(\l-\l_0)^{\chi}f(\l), \quad \l\in k.
	$$
	Then,
	$
	\bar{\phi}(\det(x_{1},\cdots,x_{n^{2}}))=(\l-\l_0)^{\chi}f(\l)
	$
	and $\bar{\phi}$ induces the isomorphism
	\begin{equation}
		\label{iso}
		\frac{\mc{O}_{\mf{p}}}{\mf{D}+\mf{I}} \simeq \frac{\mc{O}_{\mf{q}}}{\langle (\l-\l_{0})^{\chi} \rangle},
	\end{equation}
	where, recall that, $\mf{D}:=\langle \det(x_1,\dots,x_{n^{2}})\rangle$. Since $\mc{S}(k^{n})$ and $L(k)$ are Cohen--Macaulay,  the local
	intersection multiplicity of $\mc{S}(k^{n})$ and $L(k)$ at $L(\l_0)$ is given by
	\begin{align*}
		i(\mc{S}(k^{n}),L(k);L_0) &=
		\ell_{\mathcal{O}_{\mf{p}}}\big(\mathcal{O}_{\mf{p}}/(\mf{D}+\mf{I})\big) =\dim_{k}\big(\mathcal{O}_{\mf{p}}/(\mf{D}+\mf{I})\big) \\ &=\dim_{k}\left(\frac{\mc{O}_{\mf{q}}}{\langle (\l-\l_{0})^{\chi} \rangle}\right)=\chi.
	\end{align*}
	This concludes the proof.
\end{proof}

Noting that polynomial curves of degree one, that is, operator lines $L(\l)=\l T_{1}+ T_{0}$, $T_{1}\neq 0$, are always admissible, we obtain the following direct consequence of Theorem \ref{Th2.1}.

\begin{corollary}
Let $T_0,T_1\in\mathcal{L}(k^n)$ with $T_{1}\neq 0$, and  $L(\lambda)=\lambda T_1+T_0$ such that $\Sigma(L)\neq k $. Then, for every $\l_0\in \Sigma(L)$, it holds that
	\begin{equation*}
		\mf{m}[L,\lambda_{0}]=
		i(\mc{S}(k^n),L(k); L(\l_0)).
	\end{equation*}
\end{corollary}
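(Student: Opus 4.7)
The plan is simply to verify admissibility of $L(\lambda)=\lambda T_1+T_0$ and then invoke Theorem \ref{Th2.1}. So my first and essentially only task is to show that $\mathcal{A}(k^n)$ contains every operator line with $T_1\neq 0$, after which the corollary is automatic.

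To establish admissibility, I would identify $L(\lambda)$ with its $n^{2}$ scalar components $L_i(\lambda)=\lambda t_i^{(1)}+t_i^{(0)}$, where $(t_i^{(1)})_{1\leq i\leq n^{2}}$ and $(t_i^{(0)})_{1\leq i\leq n^{2}}$ are the entries of $T_1$ and $T_0$ under the usual identification $\mathcal{L}(k^{n})\simeq k^{n^{2}}$. Since $T_1\neq 0$, at least one index $i_0$ satisfies $t_{i_0}^{(1)}\neq 0$. Then
\[
\lambda \;=\; \frac{1}{t_{i_0}^{(1)}}\bigl(L_{i_0}(\lambda)-t_{i_0}^{(0)}\bigr)\;\in\; k[L_1(\lambda),\dots,L_{n^{2}}(\lambda)].
\]
As the reverse inclusion $k[L_1(\lambda),\dots,L_{n^{2}}(\lambda)]\subset k[\lambda]$ is obvious, we obtain $k[L_1(\lambda),\dots,L_{n^{2}}(\lambda)]=k[\lambda]$, so $L\in\mathcal{A}(k^{n})$.

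With admissibility in hand, the hypothesis $\Sigma(L)\neq k$ together with Theorem \ref{Th2.1} directly yields $\mathfrak{m}[L,\lambda_0]=i(\mathcal{S}(k^n),L(k);L(\lambda_0))$ for every $\lambda_0\in\Sigma(L)$, completing the argument. There is no serious obstacle here: the only content is the elementary observation that a nonzero linear polynomial in $\lambda$ with coefficients in $k$ generates $k[\lambda]$ as a $k$-algebra, which is precisely what the condition $T_1\neq 0$ provides.
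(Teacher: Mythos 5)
Your proposal is correct and follows exactly the paper's route: the paper likewise deduces the corollary from Theorem \ref{Th2.1} by observing that every operator line with $T_1\neq 0$ is admissible, and your explicit verification (recovering $\lambda$ from a component $L_{i_0}$ with nonzero leading coefficient) is just the detail the paper leaves implicit.
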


	Identity \eqref{Eq} may be false without the admissibility condition. In fact, consider $L\in\mc{P}(k^{2})$ given by
	$$
	L(\l):=\left[
	\begin{array}{ll}
		\l^{2} & 0 \\
		0 & 1
	\end{array}
	\right], \quad \l\in k.
	$$
	Then, $\mf{m}[L,0]=2$ but $i(\mc{S}(k^{2}),L(k); L(0))=1$. This is because $k[\l^{2}]\neq k[\l]$ and hence $L\notin \mc{A}(k^{2})$.

In order to prove identity \eqref{q} via Bezout theorem, we have to work in the projective space. Consider the projectivization of $\mc{L}(k^n)$ given by  $\mathbb{P}\mc{L}(k^n):=\proj k[x_0,x_1,\dots,x_{n^{2}}]$. Since the polynomial $\det(x_1,\dots,x_{n^{2}})\in k[x_1,\dots,x_{n^{2}}]$ is homogeneous, the projective closure of $\mc{S}(k^{n})$ is given by
$$
\bar{\mc{S}}(k^{n})=\proj\left(\frac{k[x_0,x_1,\dots,x_{n^{2}}]}{\mf{D}}\right), \quad \mf{D}:=\langle \det(x_1,\dots,x_{n^{2}})\rangle.
$$
Let $L\in \mc{P}(k^{n})$. Then, the projective closure of $\text{Im}(L)$ is the image of 
$$
\bar{L}:\mathbb{P}^{1}(k)\longrightarrow \mathbb{P}\mc{L}(k^{n}), \quad [\l : \mu] \mapsto [\mu^{d} : \mu^{d}L_{1}\left(\frac{\l}{\mu}\right) : \dots : \mu^{d}L_{n^{2}}\left(\frac{\l}{\mu}\right)],
$$
where $d:=\max_{i}\deg(L_{i}(\l))$. That is, $\overline{\text{Im}(L)}=\text{Im}(\bar{L})$. We define the \textit{projective spectrum} of $L\in \mc{P}(k^n)$ as $\bar{\Sigma}(L)=\Sigma(L)\cup \Sigma_{\infty}(L)$, where
$$
\Sigma_{\infty}(L):=\left\{
\begin{array}{ll}
	\emptyset & \text{if} \; \det(\bar{L}([\l:0]))\neq 0, \\
	\{\infty\} & \text{if} \; \det(\bar{L}([\l:0]))=0.
\end{array}
\right.
$$
Philosophically, we include the singleton $\{\infty\}$ to $\Sigma(L)$ if $L$ intersects $\mc{S}(k^{n})$ at infinity.

\begin{theorem}
	\label{Th2.2}
	Let $L\in\mc{A}(k^{n})$ such that $\Sigma(L)=\bar{\Sigma}(L)\neq k$. Then,
	\begin{equation}
		\label{Eq1}
	\sum_{\l\in\Sigma(L)}\mf{m}[L,\l] = \sum_{\l\in\Sigma(L)} i(\mc{S}(k^{n}), L(k); L(\l)) = mn,
	\end{equation}
	where $m:=\max_{i}\deg(L_{i}(\l))$.
\end{theorem}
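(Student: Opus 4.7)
The first equality is immediate: since $L\in\mc{A}(k^{n})$ and $\Sigma(L)\neq k$, Theorem \ref{Th2.1} applies at every $\l\in\Sigma(L)$ and yields $\mf{m}[L,\l]=i(\mc{S}(k^{n}),L(k);L(\l))$ termwise. The substance of the theorem is therefore the second equality, which I would deduce from Bezout's theorem \eqref{BT} applied to the projective closures $\bar{L}(\mathbb{P}^{1}(k))$ and $\bar{\mc{S}}(k^{n})$ inside $\mathbb{P}\mc{L}(k^{n})=\mathbb{P}^{n^{2}}(k)$.

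The plan is as follows. First, I would check that the hypotheses of \eqref{BT} are met: the ambient space $\mathbb{P}^{n^{2}}(k)$ is smooth; $\bar{\mc{S}}(k^{n})$ is an irreducible Cohen--Macaulay hypersurface (cut out by the homogeneous determinant); $\bar{L}(\mathbb{P}^{1}(k))$ is a reduced irreducible curve (automatically Cohen--Macaulay since it is one dimensional and reduced); and the assumption $\Sigma(L)\neq k$ guarantees that $\bar{L}(\mathbb{P}^{1}(k))\not\subset\bar{\mc{S}}(k^{n})$, so the intersection is proper and finite. Second, since local intersection multiplicity is intrinsically local, the multiplicities at affine points $L(\l)$ computed in the projective setting coincide with those of Theorem \ref{Th2.1}. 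The hypothesis $\bar{\Sigma}(L)=\Sigma(L)$ ensures $\Sigma_{\infty}(L)=\emptyset$, so $\bar{L}(\mathbb{P}^{1}(k))\cap\bar{\mc{S}}(k^{n})=\{L(\l):\l\in\Sigma(L)\}$, with no contribution at infinity.

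The main calculation is the computation of the two degrees. The degree of $\bar{\mc{S}}(k^{n})$ equals $n$, since the determinant is a homogeneous polynomial of degree $n$ in the entries. For the degree of the curve $\bar{L}(\mathbb{P}^{1}(k))$, recall that if $\bar{L}:\mathbb{P}^{1}(k)\to\mathbb{P}^{n^{2}}(k)$ is a morphism and $\bar{L}^{*}\mc{O}(1)=\mc{O}(e)$, then $\deg(\bar{L}(\mathbb{P}^{1}(k)))=e/\deg(\bar{L})$, where $\deg(\bar{L})$ denotes the degree onto the image. By the explicit formula for $\bar{L}$, its components are forms of degree $m$ and include $\mu^{m}$ as well as at least one form with a nonzero $\l^{m}$ coefficient (since $m=\max_{i}\deg L_{i}$); consequently no polynomial of positive degree divides all the components, i.e., $\bar{L}$ has no base points and $\bar{L}^{*}\mc{O}(1)=\mc{O}(m)$. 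Moreover, admissibility forces $\bar{L}$ to be injective on the affine chart $\{\mu\neq 0\}$ (as observed in Section \ref{S2}, since $L$ is a closed immersion), so $\bar{L}$ is birational onto its image and $\deg(\bar{L})=1$. Therefore $\deg(\bar{L}(\mathbb{P}^{1}(k)))=m$.

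Combining these ingredients with Bezout's theorem \eqref{BT} gives
$$
\sum_{\l\in\Sigma(L)}i(\mc{S}(k^{n}),L(k);L(\l))=\deg(\bar{L}(\mathbb{P}^{1}(k)))\cdot\deg(\bar{\mc{S}}(k^{n}))=m\cdot n,
$$
which, together with Theorem \ref{Th2.1}, yields \eqref{Eq1}. The main technical obstacle I anticipate is the degree computation of $\bar{L}(\mathbb{P}^{1}(k))$: carefully justifying that admissibility implies birationality of $\bar{L}$ onto its image and that the defining forms have no common factor, so that the pullback line bundle is exactly $\mc{O}(m)$ rather than a smaller twist.
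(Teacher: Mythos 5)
Your proposal is correct and follows essentially the same route as the paper: apply Bezout's theorem to $\bar{\mc{S}}(k^{n})$ and $\mathrm{Im}(\bar{L})$ in $\mathbb{P}\mc{L}(k^{n})$, use the hypothesis $\Sigma(L)=\bar{\Sigma}(L)$ to rule out contributions at infinity, identify the local multiplicities in the affine chart with those of Theorem \ref{Th2.1}, and compute $\deg(\mathrm{Im}(\bar{L}))=m$ and $\deg(\bar{\mc{S}}(k^{n}))=n$. Your justification of $\deg(\mathrm{Im}(\bar{L}))=m$ via base-point-freeness of the defining forms and birationality of $\bar{L}$ onto its image is more detailed than the paper's brief appeal to the definition of degree, but it is the same step of the same argument.
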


\begin{proof}
	Since $\Sigma(L)=\bar{\Sigma}(L)$ and $\deg(\bar{\mc{S}}(k^{n}))=n$, Bezout theorem applied to our particular case states that 
	$$
	\sum_{\l\in \Sigma(L)} i(\bar{\mc{S}}(k^{n}), {\hbox{Im}}(\bar{L}); \bar{L}(\l) ) = n\cdot \deg({\hbox{Im}}(\bar{L})).
	$$
	As $\Sigma(L)=\bar{\Sigma}(L)$, we can work in the affine chart $\mc{U}_{0}:=\left\{[1:x_1:\dots:x_{n^{2}}] : x_{i}\in k\right\}\subset \mathbb{P}\mc{L}(k^{n})$. The isomorphism
	$$
	\varphi_{0}: \mc{U}_{0}\subset \mathbb{P}\mc{L}(k^{n})\longrightarrow \Spec k[x_{1},\dots,x_{n^{2}}], \quad [1:x_{1}:\dots:x_{n^{2}}]\mapsto (x_{1},\dots,x_{n^{2}}),
	$$
	induce an isomorphism at the level of local rings. Hence, 
	$$ i(\bar{\mc{S}}(k^{n}), {\hbox{Im}}(\bar{L}); \bar{L}(\l) ) = i(\mc{S}(k^{n}), L(k); L(\l)), \quad  \l\in\Sigma(L). $$
	Finally, since $L: \Spec k[\l] \to \text{Im}(L)$ is an isomorphism, $\bar{L}:\mathbb{P}^{1}(k)\to \text{Im}(\bar{L})$ is bijective and therefore, by directly applying the definition of degree of a projective variety, we deduce that $\deg({\hbox{Im}}(\bar{L}))=\max_{i}\deg(\bar{L}_{i})=\max_{i}\deg(L_{i})$. This concludes the proof.
\end{proof}

If $L(\l)=\l T_{1}+T_{0}$ with $T_{1}\in GL(k^{n})$, then a direct computation shows that $L(k)$ does not intersect $\mc{S}(k^{n})$ at infinity and consequently, that $\Sigma(L)$ is finite. This observation leads to the following direct consequence of Theorem \ref{Th2.2}.

\begin{corollary}
	Let $T_0\in\mathcal{L}(k^n)$, $T_{1}\in GL(k^{n})$ and  $L(\lambda)=\lambda T_1+T_0$. Then, 
	$$
	\sum_{\l\in\Sigma(L)}\mf{m}[L,\l] = \sum_{\l\in\Sigma(L)} i(\mc{S}(k^{n}), L(k); L(\l)) = n.
	$$
\end{corollary}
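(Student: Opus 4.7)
The plan is to invoke Theorem \ref{Th2.2} with $m=1$; the work reduces to verifying its three hypotheses for the operator line $L(\l) = \l T_1 + T_0$, namely admissibility, $\Sigma(L) \neq k$, and $\Sigma(L) = \bar{\Sigma}(L)$.

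For admissibility, since $T_1 \neq 0$ some entry $(T_1)_{ij}$ is nonzero, so the corresponding component $L_{ij}(\l) = \l(T_1)_{ij} + (T_0)_{ij}$ is a polynomial of exact degree one in $\l$, whence $k[L_1(\l), \dots, L_{n^2}(\l)] \supseteq k[L_{ij}(\l)] = k[\l]$; this confirms that every operator line belongs to $\mc{A}(k^n)$, as already noted in the discussion following Theorem \ref{Th2.1}. For finiteness, $\det(L(\l)) = \det(\l T_1 + T_0)$ has leading term $\det(T_1)\l^n$, and by hypothesis $\det(T_1) \neq 0$, so it is a polynomial of exact degree $n$ with at most $n$ roots; in particular $\Sigma(L)$ is finite and $\Sigma(L) \neq k$.

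The remaining check is non-intersection at infinity. Taking $d = \max_i\deg(L_i) = 1$ in the formula for $\bar{L}$ from Section \ref{S2}, I would compute $\bar{L}([\l:\mu]) = [\mu : \l T_1 + \mu T_0]$, so at $\mu = 0$ the matrix part is $\l T_1$ with determinant $\l^n \det(T_1) \neq 0$. Hence $\Sigma_\infty(L) = \emptyset$ and $\bar{\Sigma}(L) = \Sigma(L)$.

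With the three hypotheses in place, Theorem \ref{Th2.2} applied with $m = 1$ delivers
\begin{equation*}
\sum_{\l \in \Sigma(L)} \mf{m}[L,\l] = \sum_{\l \in \Sigma(L)} i(\mc{S}(k^n), L(k); L(\l)) = 1 \cdot n = n.
\end{equation*}
The argument is entirely mechanical: all substantive content has been packaged into Theorem \ref{Th2.2}, so there is no real obstacle here; the only step requiring a brief verification is the behavior at infinity, which is immediate from $T_1 \in GL(k^n)$.
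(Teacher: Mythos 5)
Your proposal is correct and follows exactly the paper's route: the paper derives this corollary from Theorem \ref{Th2.2} by observing that operator lines are always admissible and that ``a direct computation shows'' $L(k)$ does not meet $\mc{S}(k^{n})$ at infinity when $T_{1}\in GL(k^{n})$, which is precisely the computation $\det(\bar{L}([\l:0]))=\det(\l T_{1})=\l^{n}\det(T_{1})\neq 0$ that you carried out. Your write-up merely makes explicit the verifications the paper leaves to the reader.
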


\section{Results in the general case}
\label{S3}

In this section, we provide a geometric interpretation of the algebraic multiplicity for general polynomial curves, without assuming the admissibility condition. To this end, we introduce the notion of a \emph{resolution} of a given curve.

Let $L\in\mc{P}(k^{n})$. We define the \emph{resolution} of $L$ as the scheme
\[
\mc{R}(L):=\Spec \left(\frac{k[\l, x_1,\dots,x_{n^{2}}]}{\mf{P}_{L}}\right), \quad \text{where} \quad \mf{P}_{L}:=\langle x_{1}-L_{1}(\l),\dots, x_{n^{2}}-L_{n^{2}}(\l)\rangle.
\]
The scheme $\mc{R}(L)$ is smooth and can be regarded as a desingularization of the image of $L$. We also consider the natural projection
\[
\pi: \Spec k[\l, x_{1},\dots,x_{n^{2}}] \longrightarrow \Spec k[x_{1},\dots, x_{n^{2}}],
\]
which forgets the parameter $\l$. A straightforward computation shows that
\[
\Sigma(L)=\{\l\in k : (T,\l)\in \mc{R}(L)\cap \pi^{-1}(\mc{S}(k^{n})) \text{ for some } T\in\mc{L}(k^{n})\}.
\]
The following result shows that the algebraic multiplicity of a general curve $L\in\mc{P}(k^{n})$ at a given eigenvalue $\lambda$ can be interpreted as the intersection multiplicity of the resolution $\mc{R}(L)$ with the scheme $\pi^{-1}(\mc{S}(k^{n}))$ at the point $(\lambda, L(\lambda))$.

\begin{theorem}
	Let $L\in \mc{P}(k^{n})$ such that $\Sigma(L)\neq k$. Then, for every $\l_0\in\Sigma(L)$, it holds that
	$$
	\mf{m}[L,\l_{0}]= i(\pi^{-1}(\mc{S}(k^{n})), \mc{R}(L); c_{\l_{0}}), \quad c_{\l_{0}}:=(L(\l_{0}),\l_{0}).
	$$
\end{theorem}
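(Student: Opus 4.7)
The plan is to follow the same template as the proof of Theorem \ref{Th2.1}, replacing the image curve $L(k)$, which may be singular or fail to be isomorphic to $\Spec k[\lambda]$ once admissibility is dropped, by the resolution $\mc{R}(L)$. The crucial observation that sidesteps admissibility is that, independently of $L$, projection onto the $\lambda$-coordinate identifies $\mc{R}(L)$ with the affine line: the relations $x_i=L_i(\lambda)$ built into $\mf{P}_L$ permit elimination of $x_1,\dots,x_{n^{2}}$, producing a ring isomorphism
$$
\psi:\frac{k[\lambda,x_{1},\dots,x_{n^{2}}]}{\mf{P}_{L}}\xrightarrow{\ \sim\ } k[\lambda],\qquad x_{i}\mapsto L_{i}(\lambda).
$$
Under $\psi$, the point $c_{\lambda_{0}}$ corresponds to the maximal ideal $\mf{q}:=\langle \lambda-\lambda_{0}\rangle\subset k[\lambda]$, so $\mc{R}(L)$ is smooth, hence Cohen--Macaulay, at $c_{\lambda_{0}}$.

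First, I would check the hypotheses required to reduce Serre's intersection multiplicity to the length identity \eqref{1.1.12}. The scheme $\pi^{-1}(\mc{S}(k^{n}))$ is cut out inside the smooth ambient $\Spec k[\lambda,x_{1},\dots,x_{n^{2}}]$ by the single equation $\det(x_{1},\dots,x_{n^{2}})=0$, hence is a Cohen--Macaulay hypersurface. Dimension counting in that ambient (dimensions $n^{2}$ and $1$, ambient dimension $n^{2}+1$) shows that the expected codimension of $\pi^{-1}(\mc{S}(k^{n}))\cap\mc{R}(L)$ is maximal. The standing hypothesis $\Sigma(L)\neq k$ translates to $\det(L(\lambda))\not\equiv 0$ in $k[\lambda]$, which ensures that $\mc{R}(L)$ is not contained in $\pi^{-1}(\mc{S}(k^{n}))$; therefore the intersection is proper and zero-dimensional at $c_{\lambda_{0}}$.

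Next, I would apply \eqref{1.1.12} and push the length computation into $k[\lambda]$ through $\psi$. Writing $\chi:=\mf{m}[L,\lambda_{0}]$, there exists $f\in k[\lambda]$ with $f(\lambda_{0})\neq 0$ such that $\det(L(\lambda))=(\lambda-\lambda_{0})^{\chi}f(\lambda)$. Since $\psi$ carries the class of $\det(x_{1},\dots,x_{n^{2}})$ to $(\lambda-\lambda_{0})^{\chi}f(\lambda)$, localizing at $c_{\lambda_{0}}$ yields an isomorphism
$$
\frac{\mc{O}_{c_{\lambda_{0}}}}{\mf{D}+\mf{P}_{L}}\;\simeq\;\frac{k[\lambda]_{\mf{q}}}{\langle (\lambda-\lambda_{0})^{\chi}\rangle},
$$
where $f(\lambda)$ is absorbed as a unit in $k[\lambda]_{\mf{q}}$. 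The right-hand side is a $k$-vector space of dimension $\chi$, which gives simultaneously its length over $\mc{O}_{c_{\lambda_{0}}}$ and the value of the desired intersection index.

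The main obstacle I anticipate is nothing deeper than the bookkeeping at the localized level: one must verify that the sum $\mf{D}+\mf{P}_{L}$ descends through $\psi$ to the ideal $\langle \det(L(\lambda))\rangle\subset k[\lambda]$ and that the resulting map is an isomorphism of local rings, not merely of rings. Once this step is in place, the argument is structurally identical to the proof of Theorem \ref{Th2.1}, with the resolution $\mc{R}(L)$ playing the role previously reserved for the image of an admissible curve; the admissibility hypothesis is no longer needed because $\mc{R}(L)\simeq\Spec k[\lambda]$ by construction, regardless of whether $L(k)$ itself does.
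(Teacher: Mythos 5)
Your proposal is correct and follows essentially the same route as the paper: both reduce the intersection index to the length of $\mc{O}_{\mf{p}}/(\mf{D}+\mf{P}_{L})$ via the Cohen--Macaulay property, then use the relations $x_{i}=L_{i}(\l)$ to identify this quotient with $k[\l]_{\mf{q}}/\langle(\l-\l_{0})^{\chi}\rangle$, absorbing the unit factor $f(\l)$. Your explicit isomorphism $\psi$ and the verification of properness are just a slightly more detailed rendering of the paper's ``simple computation.''
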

\begin{proof}
	Since $\pi^{-1}(\mc{S}(k^{n}))$ and $\mc{R}(L)$ are Cohen--Macaulay schemes, we can compute the local intersection multiplicity as
	$$
	i(\pi^{-1}(\mc{S}(k^{n})), \mc{R}(L); c_{\l_{0}})= \ell_{\mc{O}_{\mf{p}}}\left(\frac{\mc{O}_{\mf{p}}}{ \mf{D}+\mf{P}_{L}}\right),
	$$
	where $\mc{O}_{\mf{p}}$ stands for the local ring of $k[\l,x_{1},\dots,x_{n^{2}}]$ at the prime ideal $\mf{p}:=\langle \l-\l_{0}, x_{i}-L_{i}(\l_{0}) : i=1,\dots, n^{2} \rangle$. A simple computation shows that
	\begin{align*}
	\frac{\mc{O}_{\mf{p}}}{ \mf{D}+\mf{P}_{L}} & = \frac{\mc{O}_{\mf{p}}}{ \langle \det(L(\l)), x_{i}-L_{i}(\l) : i=1,\dots, n^{2}\rangle} \\
	&= \frac{\mc{O}_{\mf{p}}}{ \langle (\l-\l_{0})^{\chi}, x_{i}-L_{i}(\l) : i=1,\dots, n^{2}\rangle},
	\end{align*}
where $\chi:= \mf{m}[L,\l_{0}]$. Finally, 
	\begin{align*}
i(\pi^{-1}(\mc{S}(k^{n})), \mc{R}(L); c_{\l_{0}}) & = \ell_{\mc{O}_{\mf{p}}}\left(\frac{\mc{O}_{\mf{p}}}{ \langle (\l-\l_{0})^{\chi}, x_{i}-L_{i}(\l) : i=1,\dots, n^{2}\rangle}\right) \\ &= \dim_{k}\left(\frac{\mc{O}_{\mf{p}}}{ \langle (\l-\l_{0})^{\chi}, x_{i}-L_{i}(\l) : i=1,\dots, n^{2}\rangle}\right)=\chi.
\end{align*}
This concludes the proof.
\end{proof}

Finally, by adapting the projectivization method from Section~\ref{S2}, Theorem~\ref{Th2.2}, to the context of $\proj k[\l,x_0,x_1,\dots,x_{n^{2}}]$, we obtain a direct formulation of Bézout’s theorem in the general case.

\begin{theorem}
	\label{Th3.2}
	Let $L\in\mc{P}(k^{n})$ such that $\Sigma(L)=\bar{\Sigma}(L)\neq k$. Then,
	$$
	\sum_{\l\in\Sigma(L)}\mf{m}[L,\l] = \sum_{\l\in\Sigma(L)} i(\pi^{-1}(\mc{S}(k^{n})), \mc{R}(L); (L(\l),\l)) = dn,
	$$
	where $d:=\max_{i}\deg(L_{i}(\l))$.
\end{theorem}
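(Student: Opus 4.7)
The first equality is immediate: it is the sum over $\l\in\Sigma(L)$ of the identity established in the preceding theorem.

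For the second equality, my plan is to replicate the projectivization strategy used in the proof of Theorem~\ref{Th2.2}, this time inside the larger projective space $\mathbb{P}^{n^{2}+1}:=\proj k[\l,x_{0},x_{1},\dots,x_{n^{2}}]$, in which the ambient affine scheme $\Spec k[\l,x_{1},\dots,x_{n^{2}}]$ sits as the distinguished chart $\mc{U}_{0}=\{x_{0}\neq 0\}$. In this chart $\pi^{-1}(\mc{S}(k^{n}))$ is cut out by $\det(x_{1},\dots,x_{n^{2}})$, which is homogeneous of degree $n$ and independent of $\l$ and $x_{0}$; hence its projective closure is a hypersurface of degree $n$. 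The projective closure $\bar{\mc{R}}(L)$ is realized as the image of the morphism $\bar{\rho}:\mathbb{P}^{1}\to\mathbb{P}^{n^{2}+1}$ given by
\[
[\l:\mu]\;\longmapsto\;\bigl[\,\l\mu^{d-1}\,:\,\mu^{d}\,:\,\mu^{d}L_{1}(\l/\mu)\,:\,\cdots\,:\,\mu^{d}L_{n^{2}}(\l/\mu)\,\bigr].
\]
All entries are homogeneous of common degree $d$ in $\l,\mu$, and their greatest common divisor is $1$ because, by the definition of $d$, some $L_{i}$ has degree exactly $d$, producing a pure $\l^{d}$ term that prevents $\mu$ from being a common factor. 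Moreover, $\bar{\rho}$ is birational onto its image since the first coordinate $\l$ already separates affine parameters. Consequently $\bar{\mc{R}}(L)$ is an irreducible curve of degree $d$.

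Next I would verify two facts: (i) the intersection is proper, i.e.\ $\bar{\mc{R}}(L)\not\subset\overline{\pi^{-1}(\mc{S}(k^{n}))}$, which is forced by $\Sigma(L)\neq k$ (equivalently, $\det(L(\l))\not\equiv 0$); and (ii) the condition $\Sigma(L)=\bar{\Sigma}(L)$ pushes all intersection points into $\mc{U}_{0}$. For (ii), the unique point of $\bar{\mc{R}}(L)$ outside $\mc{U}_{0}$ is $\bar{\rho}([1:0])$, whose $x_{i}$-entries are projectively the leading coefficients of the $L_{i}$, namely the entries of $T_{d}$; this point lies on $\overline{\pi^{-1}(\mc{S}(k^{n}))}$ iff $\det(T_{d})=0$, which is precisely what $\Sigma_{\infty}(L)=\emptyset$ forbids. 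Since both schemes are Cohen--Macaulay and meet properly in finitely many points of $\mc{U}_{0}$, the curve/hypersurface Bezout theorem \eqref{BT} applied inside $\mathbb{P}^{n^{2}+1}$ yields
\[
\sum_{p\in\bar{\mc{R}}(L)\cap\overline{\pi^{-1}(\mc{S}(k^{n}))}} i\bigl(\bar{\mc{R}}(L),\overline{\pi^{-1}(\mc{S}(k^{n}))};p\bigr)=d\cdot n,
\]
and the canonical local-ring isomorphism between $\mc{U}_{0}$ and $\Spec k[\l,x_{1},\dots,x_{n^{2}}]$ identifies each summand with $i(\pi^{-1}(\mc{S}(k^{n})),\mc{R}(L);(L(\l),\l))$, which will close the argument.

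The main technical point, and the step where the precise definition $d=\max_{i}\deg(L_{i})$ is essential, is the degree computation $\deg(\bar{\mc{R}}(L))=d$: one has to confirm simultaneously that $\bar{\rho}$ is a bona fide morphism (no base points in $\mathbb{P}^{1}$) and is birational onto its image. Once this degree count and the infinity check are in place, the remainder is a direct transcription of the argument given for Theorem~\ref{Th2.2}.
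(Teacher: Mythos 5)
Your proposal is correct and is precisely the adaptation the paper has in mind: the paper gives no written proof of Theorem~\ref{Th3.2}, stating only that one adapts the projectivization argument of Theorem~\ref{Th2.2} to $\proj k[\l,x_0,x_1,\dots,x_{n^{2}}]$, and your construction of $\bar{\rho}$, the degree count $\deg(\bar{\mc{R}}(L))=d$ via gcd~$=1$ plus birationality (the coordinate $\mu^{d}$ rules out common factors other than powers of $\mu$, and the ratio of the first two coordinates recovers the affine parameter), and the identification of the unique point at infinity with the locus $\det(T_{d})=0$ excluded by $\Sigma_{\infty}(L)=\emptyset$ supply exactly the missing details. The only cosmetic caveat is the degenerate case $d=0$, where $\l\mu^{d-1}$ is not a form of degree $d$; there $\Sigma(L)$ is empty or all of $k$ and the identity is trivial, so this does not affect the argument.
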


In the monic case, that is, when $L\in \mc{P}(k^{n})$ satisfies $T_{m} = I_{n}$, we have $\Sigma(L) = \bar{\Sigma}(L) \neq k$, and hence Theorem~\ref{Th3.2} recovers identity~\eqref{I2}.

\section{Worked example}\label{S4}

In this final section, we present an explicit example to illustrate the main concepts and results developed throughout the paper.

Let $k = \mathbb{C}$ and consider the matrix polynomial $L \in \mc{P}(\mathbb{C}^{2})$ defined by
\[
L(\l):=\begin{bmatrix}
	\l^{2}-1 & \l \\
	\l & 0
\end{bmatrix}
= \begin{bmatrix}
	1 & 0 \\
	0 & 0
\end{bmatrix} \l^{2} + \begin{bmatrix}
	0 & 1 \\
	1 & 0
\end{bmatrix} \l + \begin{bmatrix}
	-1 & 0 \\
	0 & 0
\end{bmatrix}, \quad \l\in \mathbb{C}.
\]
Since $\mathbb{C}[\l^{2}-1,\l] = \mathbb{C}[\l]$, the curve $L$ is admissible, i.e., $L \in \mc{A}(\mathbb{C}^{2})$. A direct computation shows that
\[
\det(L(\l)) = -\l^{2},
\]
so that $\Sigma(L) = \{0\}$ and the algebraic multiplicity at the origin is $\mf{m}[L,0] = 2$.

On the other hand, the image of $L$ can be algebraically described as
\[
L(\mathbb{C}) = \Spec\left( \frac{\mathbb{C}[x,y,z,t]}{\langle x - y^2 + 1,\ y - z,\ t \rangle} \right).
\]
Let $\mathcal{O}_{\mf{p}}$ be the local ring of $\mathbb{C}[x,y,z,t]$ at the point $\mf{p} = \langle x+1, y, z, t \rangle$, which corresponds to $L(0)$. A straightforward computation yields
\begin{align*}
	i(\mc{S}(\mathbb{C}^{2}), L(\mathbb{C}); L(0)) &= \ell_{\mathcal{O}_{\mf{p}}} \left( \frac{\mathcal{O}_{\mf{p}}}{\langle xt - yz,\ x - y^2 + 1,\ y - z,\ t \rangle} \right) \\
	&= \dim_{\mathbb{C}} \left( \frac{\mathcal{O}_{\mf{p}}}{\langle y^2,\ x+1,\ y - z,\ t \rangle} \right) = \dim_{\mathbb{C}} \left( \frac{\mathbb{C}[x,y,z,t]}{\langle y^2,\ x+1,\ y - z,\ t \rangle} \right) = 2.
\end{align*}
Therefore, as predicted by Theorem~\ref{Th2.1}, we recover the equality
\[
\mf{m}[L,0] = i(\mc{S}(\mathbb{C}^{2}), L(\mathbb{C}); L(0)).
\]

However, the identity in Theorem~\ref{Th2.2} does not hold in this case. Indeed, although
\[
i(\mc{S}(\mathbb{C}^{2}), L(\mathbb{C}); L(0)) = 2 = \sum_{\lambda \in \Sigma(L)} i(\mc{S}(\mathbb{C}^{2}), L(\mathbb{C}); L(\lambda)),
\]
we note that
\[
dn = \deg(L) \cdot \deg(\mc{S}(\mathbb{C}^{2})) = 2 \cdot 2 = 4 \neq 2.
\]
This occurs because the curve $L$ intersects $\mc{S}(\mathbb{C}^{2})$ at infinity; in particular, $\Sigma(L) \subsetneq \bar{\Sigma}(L)$. 
To see this explicitly, consider the projective closure of $\operatorname{Im}(L)$, which is the image of the map
\[
\bar{L}: \mathbb{P}^{1}(\mathbb{C}) \longrightarrow \mathbb{P} \mc{L}(\mathbb{C}^{2}), \quad [\lambda : \mu] \mapsto \left[ \mu^{2} : \begin{bmatrix}
	\lambda^{2} - \mu^{2} & \mu\lambda \\
	\mu\lambda & 0
\end{bmatrix} \right].
\]
In particular, when $\mu = 0$, we have
\[
\bar{L}([\lambda : 0]) = \left[ 0 : \begin{bmatrix}
	1 & 0 \\
	0 & 0
\end{bmatrix} \right] \in \bar{\mc{S}}(\mathbb{C}^{2}),
\]
so that $\bar{L}$ meets the closure of the $\mc{S}(\C^{2})$ at the point at infinity. Consequently,
\[
\bar{\Sigma}(L) = \{0,\infty\},
\]
and the missing contribution in identity~\eqref{Eq1} comes from the intersection at infinity:
\[
i(\bar{\mc{S}}(\mathbb{C}^{2}), \bar{L}(\mathbb{P}^{1}); \bar{L}([\lambda:0])) = 2.
\]

\end{document}